%% file: main.tex
\documentclass[11pt,letterpaper]{article}
\usepackage[T1]{fontenc}
\usepackage[utf8]{inputenc}
\usepackage{lmodern}
\usepackage[margin=1in]{geometry}
\usepackage{amsmath,amssymb,amsthm,mathtools}
\usepackage{booktabs,array,longtable}
\usepackage{graphicx}
\usepackage{float}
\usepackage{listings}
\usepackage{microtype}
\usepackage{xurl}
\usepackage[hidelinks]{hyperref}
\newcommand{\R}{\mathbb R}
\newcommand{\Z}{\mathbb Z}
\newcommand{\ind}{\mathbf 1}
\newcommand{\A}{\mathcal A}
\numberwithin{equation}{section}
\newtheorem{theorem}{Theorem}[section]
\newtheorem{lemma}[theorem]{Lemma}
\newtheorem{proposition}[theorem]{Proposition}
\newtheorem{corollary}[theorem]{Corollary}
\hypersetup{pdftitle={An improved lower bound for the minimum-autocorrelation constant},pdfauthor={Shenai Chen and EnHao Gu},pdfsubject={Exact rational certificates and ordinary-function autocorrelation bounds}}
\title{An improved lower bound for the\\minimum-autocorrelation constant}
\author{Shenai Chen\\\small Independent Researcher\and EnHao Gu\\\small Independent Researcher}
\date{October 8, 2026}
\begin{document}
\maketitle
\begin{abstract}
We construct nonnegative bounded compactly supported functions on the real line that give the lower bound $\A\ge 0.4103779774321214877852987\ldots$ for the minimum-autocorrelation constant. This improves the explicitly certified bound $2378625/5958277=0.39921356\ldots$ reported by Russell. The construction combines a point mass with a monotone degree-ten polynomial density; a one-sided atom-removal argument then produces ordinary functions. Its numerical content reduces to exact rational polynomial identities and 20 Bernstein subinterval certificates. The finite certificate, actual overlap integrals, admissibility, mass formula, and ordinary-function witness bounds have been formalized in Lean 4.19.0; the pinned proof sources and recorded audit are supplied. The passage to the supremum constant is given separately as a mathematical argument. As applications, the same construction yields an upper bound below $1.5610183$ for the large-multiplicity generalized-difference-basis coefficient and a feasible squared mass below $2.436778$ for a continuous sparse-ruler relaxation. An appendix quantifies the loss under independent sampling followed by completion. No improved bound for the complete sparse ruler constant, or global optimality of the construction, is asserted.
\end{abstract}
\input{sections/01_introduction}
\input{sections/02_profile}
\input{sections/03_certificate}
\input{sections/04_witness}
\input{sections/05_applications}
\input{sections/06_reproducibility}
\appendix
\input{sections/appendix}

\input{references}
\end{document}

%% file: sections/01_introduction.tex
\section{Introduction and main result}\label{sec:introduction}
For a nonnegative function $f\in L^1(\R)$ of positive mass, put
\begin{equation}\label{eq:definition}
 a_f(t)=\int_{\R}f(x)f(x+t)\,dx,
 \qquad
 \A=\sup_{\substack{f\ge0,\ f\in L^1(\R)\\\int_{\R}f>0}}
 \frac{\inf_{0\le t\le1}a_f(t)}{\left(\int_{\R}f\right)^2}.
\end{equation}
For the full $L^1$ class we interpret the nonnegative integral defining $a_f$ as an extended Lebesgue integral. Tonelli's theorem gives $\int_{\R}a_f=(\int_{\R}f)^2$, so the infimum in \eqref{eq:definition} is finite. Every witness used below is bounded and compactly supported, and has a finite, ordinary real-valued autocorrelation at every shift. All stated lower bounds hold at every $t\in[0,1]$, not merely almost everywhere.

The problem asks how large the least autocorrelation on a unit interval of shifts can be, relative to squared mass. It is connected with difference bases in additive combinatorics. Barnard and Steinerberger \cite[Theorem~2]{barnard2020} established a lower bound greater than $0.37$ and the Fourier upper bound
\begin{equation}\label{eq:fourier-bound}
 \A\le\frac1K=0.410767488189405\ldots,
 \qquad K=2-2\inf_{\xi\in\R}\frac{\sin\xi}{\xi}
 =2.434467256422443\ldots,
\end{equation}
where the quotient at zero is interpreted by continuity. Madrid and Ramos \cite{madrid2021} studied related extremal questions and positive-measure formulations; their strict improvement of this upper bound in Theorem~1.7 concerns a class with an additional tail-domination condition. We do not use an unconditional numerical improvement from that result. The same minimum-autocorrelation problem appears as Problem~6.6 of Georgiev, G\'omez-Serrano, Tao, and Wagner \cite{georgiev2025}; it is different from the minimum-overlap problem listed there as Problem~6.5.

A public exact-rational certificate of Russell \cite{russell2026} gives
\begin{equation}\label{eq:russell-bound}
 \A\ge\frac{2378625}{5958277}=0.39921356\ldots.
\end{equation}
Our main contribution is a stronger explicit construction with a short finite certificate and an ordinary-function proof. A historical record of a formalization is discussed in Section~\ref{sec:reproducibility}. The comparison in \eqref{eq:russell-bound} is with the specified public certificate, not an assertion of an exhaustive bibliographic priority search.

Define the following exact positive integers:
\begin{equation}\label{eq:main-integers}
\begin{aligned}
 N&=4713977940944586245580976375978623597750000000000000000,\\
 D&=11486917427785951575611071745219515905862879073559791929,\\
 D_F&=11486917430134691792906455293684961315501879073559791929.
\end{aligned}
\end{equation}
Set $R=N/D$ and $R_F=N/D_F$.

\begin{theorem}[Improved minimum-autocorrelation bound]\label{thm:main}
There is an explicitly specified nonnegative bounded compactly supported function $F_{10^{-10}}$ such that
\begin{equation}\label{eq:explicit-witness}
 \frac{a_{F_{10^{-10}}}(t)}{\left(\int_{\R}F_{10^{-10}}\right)^2}
 \ge R_F=0.4103779773482111379678143\ldots
 \qquad(0\le t\le1).
\end{equation}
For every real $q<R$, there is such an ordinary function $f$ for which
\begin{equation}\label{eq:limiting-witness}
 \frac{a_f(t)}{\left(\int_{\R}f\right)^2}\ge q
 \qquad(0\le t\le1).
\end{equation}
Consequently,
\begin{equation}\label{eq:main-bound}
 \A\ge R=0.4103779774321214877852987\ldots>0.4103779.
\end{equation}
\end{theorem}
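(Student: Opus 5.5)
The plan follows the structure announced in the abstract: first certify a measure-valued profile, then pass to ordinary functions by removing the atom. We work with a positive finite measure $\mu=c\,\delta_{0}+p(x)\ind_{[0,L]}(x)\,dx$, where $c>0$ is the mass of the point mass and $p$ is the monotone degree-ten polynomial density of Section~\ref{sec:introduction}'s construction, to be fixed explicitly in Section~2. Its autocorrelation is
\[
a_\mu(t)=c^2\ind_{t=0}+c\,p(t)\ind_{[0,L]}(t)+\int p(x)p(x+t)\,dx \qquad (t\ge0).
\]
For $t\in(0,1]$ this is an explicit piecewise polynomial in $t$, with breakpoints where $t$ crosses the support length. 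The first step is to show that $a_\mu(t)-R\,\mu(\R)^2\ge0$ on $(0,1]$, where $\mu(\R)=c+\int p$ and $R=N/D$. We expand the overlap integral exactly over $\mathbb Q$, which is the ``exact rational polynomial identities'' step. We then cut $[0,1]$ into the 20 subintervals and, on each, rewrite the difference in the Bernstein basis. Nonnegativity of all Bernstein coefficients certifies the inequality on that piece. We expect the bound to be tight, with equality, at finitely many shifts.

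Second, we replace the atom by an ordinary function. For $\varepsilon>0$ we set $F_\varepsilon=p\,\ind_{[0,L]}+(c/\varepsilon)\ind_{[-\varepsilon,0]}$. The atom is smeared only to the left of the origin; this is the one-sided choice. The atom--density cross term then becomes $\frac{c}{\varepsilon}\int_{-\varepsilon}^0 p(x+t)\,dx$, which by monotonicity of $p$ is at least $c\,p(t-\varepsilon)$ or $c\,p(t)$, depending on the direction of monotonicity. Together with the atom--atom term, which is $\ge0$, and the unchanged density--density term, this gives $a_{F_\varepsilon}(t)\ge a_\mu(t)-c\,\omega(\varepsilon)$ for an explicit loss $\omega$. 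The mass $\int F_\varepsilon=\mu(\R)$ is unchanged. Near $t=0$ the atom's self-overlap $c^2(1-t/\varepsilon)/\varepsilon$ is large, so no loss arises there.

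Both claims then follow from this estimate. For $\varepsilon=10^{-10}$ we bound the loss exactly and rationally to obtain the denominator $D_F$, giving \eqref{eq:explicit-witness}. For arbitrary $q<R$ we choose $\varepsilon$ small enough that $R-C\varepsilon\ge q$, giving \eqref{eq:limiting-witness}. Finally \eqref{eq:main-bound} follows by taking the supremum over $q<R$ in \eqref{eq:definition}.

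The main obstacle is the first step. We must find, and certify exactly, a profile at which the minimum of the autocorrelation is attained with near-equality at several shifts. The Bernstein certificate may need subdivision at the touching points. If equality is attained exactly, we would factor out the square of the vanishing linear factor before applying the Bernstein positivity test.
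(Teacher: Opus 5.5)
Your architecture matches the paper's: exactly certify $g\ge\gamma$ for the atomic measure $\mu$, then remove the atom on one side. However, you smear the atom to the opposite side. The paper's density $h=\phi/a$ is nondecreasing: zero on $[0,b)$, then the ramp $P((x-b)/w)/a$, then the plateau $1/a$ on $(c_0,1]$. The paper therefore puts the rectangle on $[0,\delta]$ and extends the plateau to $(1,1+\delta]$. Since $h_\delta$ is nondecreasing on $[0,1+\delta]$, we get $h_\delta(x+t)\ge h(t)$ for $x\in[0,\delta]$, so the atom--density term is at least $\phi(t)$ with no loss. The inequality $h_\delta\ge h$ handles the density--density term, and $a_{F_\delta}\ge\gamma$ holds exactly. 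The only price is the extra mass $\delta/a$, which is why $R_F=\gamma/(m+10^{-10}/a)^2$; only the certified monotonicity is used.

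Your left smear on $[-\varepsilon,0]$ keeps the mass $m$ and needs no plateau extension, since it never reaches the jump of $\phi$ at $x=1$. But it is the lossy direction: the cross term is $\frac1\varepsilon\int_{t-\varepsilon}^t\phi\le\phi(t)$, strictly on the ramp. Because $\phi$ is continuous on $(-\infty,1]$ (as $P(0)=0$ and $P(1)=1$), the loss tends to $0$. Hence \eqref{eq:limiting-witness} and \eqref{eq:main-bound} do follow from your route. You must still certify $P\ge0$ for admissibility. In the paper this is the $P'\ge0$ leaf, which is one of the 20, so only 19 leaves concern $g$.

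The gap is in the explicit clause \eqref{eq:explicit-witness}. By \eqref{eq:ratio-identities}, $D_F$ is the denominator of $\gamma/(m+10^{-10}/a)^2$, so it encodes the paper's mass increase. Your witness has mass $m$ and certified ratio $(\gamma-\text{loss})/m^2$, which is a different number, so no loss estimate will ``produce $D_F$''. Worse, your stated estimate does not even reach $R_F$. Bounding the cross term below by $c\,p(t-\varepsilon)$ gives a loss of $\varepsilon\sup\phi'=\varepsilon\max_{[0,1]}P'/w$. Since $P(0.2)-P(0.15)\approx0.0733$, we have $\max P'\ge1.46$. With $w\approx0.68$ and $m^2\approx2.437$, your certified ratio is then about $R-0.88\cdot10^{-10}$, below $R_F\approx R-0.84\cdot10^{-10}$.

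You can repair this within your construction by using the average, $\frac1\varepsilon\int_{t-\varepsilon}^t\phi\ge\phi(t)-\frac\varepsilon2\sup\phi'$, together with an extra certified bound such as $P'\le2$ on $[0,1]$. That gives about $R-0.6\cdot10^{-10}>R_F$ for your own witness. Alternatively, switch to the right-side smear with plateau extension, which yields $N/D_F$ exactly and needs no continuity estimate.
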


The functions are given in \eqref{eq:F}. The proof separates two issues: an exact polynomial certificate for a measure with an atom, and a monotone extension that removes the atom without losing the required correlation lower bound. The number $R$ is a limiting guarantee for a family of functions; neither attainment of $R$ nor optimality of the polynomial profile is claimed.

Section~\ref{sec:profile} gives all exact parameters and the overlap reduction. Section~\ref{sec:certificate} describes the finite certificate, and Section~\ref{sec:ordinary} proves Theorem~\ref{thm:main}. Section~\ref{sec:applications} records two applications of the same construction. Section~\ref{sec:reproducibility} specifies the historically reported formalized statements, the available evidence, and the reconstructed exact checks. Appendix~\ref{app:sampling} treats an independent-sampling-and-completion procedure; it is secondary to the continuous lower bound and is not part of the reported Lean formalization.

%% file: sections/02_profile.tex
\section{An exact atomic--polynomial profile}\label{sec:profile}
Every terminating decimal specifying a parameter below denotes an exact rational number. Let
\begin{equation}\label{eq:params}
\begin{aligned}
 a&=0.626600210121, & b&=0.160189995012, & \gamma&=0.99999999,\\
 c_0&=1-b=0.839810004988, & w&=1-2b=0.679620009976.
\end{aligned}
\end{equation}
Thus $0<b<w<c_0<1$. With the first-kind Chebyshev polynomials defined by $T_0(z)=1$, $T_1(z)=z$, and $T_{j+1}(z)=2zT_j(z)-T_{j-1}(z)$, put
\begin{equation}\label{eq:profile-polynomial}
 P(s)=2s-s^2+s(1-s)^2\sum_{j=0}^{7}q_jT_j(2s-1),
\end{equation}
where the coefficients are listed in Table~\ref{tab:profile}. The parameterization enforces $P(0)=0$, $P(1)=1$, and $P'(1)=0$. The certificate in Section~\ref{sec:certificate} proves $P'\ge0$ on $[0,1]$, and hence $0\le P\le1$ there.
\begin{table}[ht]
\centering
\caption{Exact coefficients in the degree-ten profile.}\label{tab:profile}
\begin{tabular}{@{}c@{\hspace{2em}}r@{\hspace{2.5em}}c@{\hspace{2em}}r@{}}
\toprule
$j$ & $q_j$ & $j$ & $q_j$\\
\midrule
0 & $-0.423577846263$ & 4 & $0.000203543244$\\
1 & $0.290387575223$ & 5 & $0.000666727048$\\
2 & $0.011765259840$ & 6 & $0.000240602158$\\
3 & $-0.005796017613$ & 7 & $0.000112563884$\\
\bottomrule
\end{tabular}
\end{table}

Define
\begin{equation}\label{eq:measure-profile}
 \phi(x)=\begin{cases}
  P((x-b)/w),& b\le x\le c_0,\\
  1,&c_0<x\le1,\\
  0,&\text{otherwise},
 \end{cases}
 \qquad h(x)=\frac{\phi(x)}a,
 \qquad \mu=a\delta_0+h(x)\,dx.
\end{equation}
Here $\delta_0$ denotes unit point mass at the origin. The positive measure $\mu$ is supported on $[0,1]$, but is not itself an admissible function in \eqref{eq:definition}.

For $0<t<1$, the absolutely continuous part of $\mu*\widetilde\mu$, where $\widetilde\mu(E)=\mu(-E)$, has density
\begin{equation}\label{eq:g}
 g(t)=ah(t)+\int_{\R}h(x)h(x+t)\,dx
 =\phi(t)+\frac{B(t)}{a^2},
 \qquad B(t)=\int_{\R}\phi(x)\phi(x+t)\,dx.
\end{equation}
There is also the nonnegative atom $a^2\delta_0$. We extend the displayed expression for $g$ continuously from $(0,1)$ to its endpoints; more explicitly, the right side of \eqref{eq:g}, with the prescribed values of $\phi$, defines $g$ on the whole closed interval. The atom and the density play complementary roles: at small shifts density--density overlap contributes, while at larger shifts the atom--density term becomes substantial.

To compute $B$, let
\begin{equation}\label{eq:overlap-components}
\begin{aligned}
 I(z)&=\int_0^zP(s)\,ds,&
 u(t)&=1-\frac t w,&
 v(t)&=\frac{c_0-t}{w},\\
 H(t)&=w\int_0^{u(t)}P(s)P(s+t/w)\,ds.
\end{aligned}
\end{equation}
The function $H$ is a polynomial in $t$, although its interpretation as a ramp--ramp overlap is used only for $0\le t\le w$.

\begin{lemma}[Overlap identity]\label{lem:overlap}
For $0\le t\le1$,
\begin{equation}\label{eq:B}
 B(t)=\begin{cases}
 H(t)+w\bigl(I(1)-I(u(t))\bigr)+b-t,&0\le t\le b,\\
 H(t)+w\bigl(I(v(t))-I(u(t))\bigr),&b\le t\le w,\\
 wI(v(t)),&w\le t\le c_0,\\
 0,&c_0\le t\le1.
 \end{cases}
\end{equation}
\end{lemma}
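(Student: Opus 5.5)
The plan is to compute $B(t)=\int\phi(x)\phi(x+t)\,dx$ directly by splitting the support of $\phi$ into its two pieces: the ramp $[b,c_0]$, where $\phi(x)=P((x-b)/w)$, and the plateau $(c_0,1]$, where $\phi\equiv1$. The ramp has length $c_0-b=w$ and the plateau has length $1-c_0=b$. For $t\ge0$, the product $\phi(x)\phi(x+t)$ can be nonzero only if $x\ge b$ and $x+t\le1$, so $x\in[b,1-t]$. We split this $x$-range according to which piece contains $x$ and which contains $x+t$. There are three possible contributions: ramp--ramp, ramp--plateau (with $x$ on the ramp and $x+t$ on the plateau), and plateau--plateau. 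The fourth combination, plateau--ramp, cannot occur, since it would need $x+t<x$.

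The ramp--ramp term comes first.
\begin{itemize}
\item It requires $b\le x$ and $x+t\le c_0$, so it is present exactly when $t\le w$.
\item Substituting $s=(x-b)/w$ turns it into $w\int_0^{1-t/w}P(s)P(s+t/w)\,ds=H(t)$, since $x+t\le c_0$ corresponds to $s\le u(t)$.
\item For $t\ge w$ this range is empty or degenerate, so the term vanishes there, which matches the absence of $H$ in the last two cases.
\end{itemize}

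The ramp--plateau term requires $x\in[b,c_0]$ and $x+t\in(c_0,1]$, that is, $x\in[\max(b,c_0-t),\min(c_0,1-t)]$. The integrand is $P((x-b)/w)$, and after the same substitution the contribution is $w\bigl(I(s_{\max})-I(s_{\min})\bigr)$. The lower limit is $s_{\min}=\max(0,u(t))$. The upper limit is $s_{\max}=\min(1,v(t))$, because $(1-t-b)/w=(c_0-t)/w=v(t)$. This gives three regimes:
\begin{itemize}
\item For $t\le b$ we have $v\ge1$ and $u\ge0$, so the term is $w(I(1)-I(u))$.
\item For $b\le t\le w$ we have $v\le1$ and $u\ge0$, so the term is $w(I(v)-I(u))$.
\item For $w\le t\le c_0$ we have $u\le0$ and $v\ge0$, so the term is $w(I(v)-I(0))=wI(v)$, using $I(0)=0$.
\item For $t\ge c_0$ the interval is empty.
\end{itemize}

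The plateau--plateau term is the length of $(c_0,1]\cap(c_0-t,1-t]$, which is $\max(0,b-t)$. It is therefore $b-t$ for $t\le b$ and zero afterwards. Adding the three contributions case by case gives \eqref{eq:B}.

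The consistency facts needed at the case boundaries are $c_0-b=w$, $1-c_0=b$, and $b<w<c_0$ from \eqref{eq:params}. With these, the formulas agree at $t=b$, $t=w$ and $t=c_0$: at $t=b$ we have $v=1$, at $t=w$ we have $u=0$ and $H(w)=0$, and at $t=c_0$ we have $v=0$. The endpoint conventions (closed versus half-open intervals) are measure-zero and so do not matter.

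The only real care point is the bookkeeping of the integration limits, namely checking which of $\max$ and $\min$ is active in each regime. I expect that to be the main, though mild, obstacle; the rest is a routine change of variables.
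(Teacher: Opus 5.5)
Your proposal is correct and follows the paper's proof essentially step for step. Both use the same ramp--ramp, ramp--plateau and plateau--plateau decomposition, the substitution $x=b+ws$ giving $H(t)$, the ramp--plateau window $\max(b,c_0-t)\le x\le\min(c_0,1-t)$, and the plateau--plateau length $(b-t)_+$; your explicit check of which bound is active in each regime (using $v(t)\ge1$ exactly when $t\le b$, and $u(t)\ge0$ exactly when $t\le w$) spells out what the paper states without detail.
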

\begin{proof}
Split the product into ramp--ramp, ramp--plateau, and plateau--plateau terms. For $0\le t\le w$, the ramp--ramp variable ranges over $b\le x\le c_0-t$. Substituting $x=b+ws$ gives $H(t)$. It contributes zero for larger $t$.

The ramp--plateau term has $\max(b,c_0-t)\le x\le\min(c_0,1-t)$ when this interval is nonempty. Its integral is respectively $w(I(1)-I(u(t)))$, $w(I(v(t))-I(u(t)))$, and $wI(v(t))$ in the first three intervals of \eqref{eq:B}, and zero in the fourth. A plateau point cannot precede a ramp point when $t\ge0$. The remaining plateau--plateau term has length $(b-t)_+$. Summing gives \eqref{eq:B}; changes at isolated endpoints do not change the integrals.
\end{proof}
Combining \eqref{eq:B} with \eqref{eq:g} defines four rational polynomials $g_0,g_1,g_2,g_3$ on those same intervals: the added term $\phi(t)$ is zero on $[0,b]$, is $P((t-b)/w)$ on $[b,c_0]$, and is one on $[c_0,1]$. The formulas agree at all internal breakpoints. Their degrees are at most $21,21,11,0$ respectively.

%% file: sections/03_certificate.tex
\section{The finite rational certificate}\label{sec:certificate}

\begin{lemma}[Certified inequalities]\label{lem:certified}
For the exact parameters \eqref{eq:params} and Table~\ref{tab:profile},
\[
 P'(s)\ge0\quad(0\le s\le1),\qquad
 g_i(t)\ge\gamma\quad(t\text{ in the interval of }g_i).
\]
In particular $g(t)\ge\gamma$ on $[0,1]$.
\end{lemma}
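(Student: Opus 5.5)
The plan is to reduce each inequality in Lemma~\ref{lem:certified} to nonnegativity of an explicit rational polynomial on a closed interval, and to certify each one with exact Bernstein-basis arithmetic.

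\textbf{Setting up the polynomials.} First, expand $P$ from \eqref{eq:profile-polynomial} exactly in the monomial basis. The $q_j$ are terminating decimals, so all coefficients are rational. Differentiate to get $P'$, of degree nine.

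Next, form $g_0,\dots,g_3$ exactly:
\begin{itemize}
\item compute $I(z)$ by termwise integration;
\item compute $H(t)$ by expanding $P(s)P(s+t/w)$ as a bivariate polynomial and integrating in $s$ from $0$ to $u(t)$;
\item substitute into \eqref{eq:B} and add $\phi(t)$ as described after Lemma~\ref{lem:overlap};
\item subtract $\gamma$.
\end{itemize}
The polynomial $g_3-\gamma=1-\gamma>0$ is constant, so it is immediate.

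This gives four polynomials to treat: $P'$ on $[0,1]$, and $g_i-\gamma$ on $[0,b]$, $[b,w]$, $[w,c_0]$ for $i=0,1,2$. For each one, rescale the interval affinely to $[0,1]$ and convert to the Bernstein basis of its degree (at most $21$).

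\textbf{Bernstein certificates.} If all Bernstein coefficients on a subinterval are nonnegative (respectively, at least zero after subtracting $\gamma$), then by the convex-hull property the polynomial is nonnegative there.

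Where the coefficients on a whole interval are not all of the right sign, subdivide. I would use de~Casteljau splitting at rational points, or simply re-expand on explicitly chosen rational subintervals. I would aim for a total of about twenty subintervals across all four polynomials, matching the count announced in the abstract. Every step is exact rational arithmetic, so the certificate is a finite list of subintervals together with sign checks on their Bernstein coefficients.

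\textbf{The touching points.} The main obstacle is near-tightness. Since $\gamma=0.99999999$, the functions $g_i$ presumably equioscillate down to within about $10^{-8}$ of $1$ at several interior points. Near those points the Bernstein coefficients converge only quadratically under subdivision.

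There is also a genuine zero of $P'$ at $s=1$, where $P'(1)=0$ is forced by the parameterization. I would handle it by factoring $P'(s)=(1-s)Q(s)$ and certifying $Q\ge0$. This avoids subdividing toward a boundary zero. For $P'$ near $s=0$ I would check the value $P'(0)=2+q$-terms directly.

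For the $g_i$, I would first locate the near-minima numerically. I would then place subdivision points so that each near-minimum lies in a short interval, of length perhaps $10^{-3}$ to $10^{-2}$. On such an interval the Bernstein coefficients approximate the values to within $O(h^2\|g''\|)$, which suffices because the margin is about $10^{-8}$ relative to curvature. If even that proves too tight, the fallback is a local Taylor certificate: bound $g_i(t_*)$ and $g_i'(t_*)$ exactly at a rational point $t_*$, and bound $g_i''$ from below by a Bernstein bound.

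Finally, $g\ge\gamma$ on $[0,1]$ follows from the four interval statements, because the pieces agree at the breakpoints by Lemma~\ref{lem:overlap}.
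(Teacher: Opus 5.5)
Your proposal is correct and matches the paper's proof. Both compute exact rational Bernstein coefficients via \eqref{eq:bernstein-conversion}, halve by rational de Casteljau subdivision where the test fails, and accept leaves whose coefficients are all nonnegative. The paper's partition has 20 leaves with maximum depth 6 on $[b,w]$, so your Taylor fallback is never needed. The only minor difference concerns $P'$: the paper certifies it directly on $[0,1]$ with a single leaf, since the forced zero at $s=1$ just gives a zero end coefficient. Your factorization $P'(s)=(1-s)Q(s)$ is what the paper uses in its supplementary Sturm-sequence audit.
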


\begin{proof}[Computer-assisted proof]
For a rational polynomial $p$ of degree at most $d$ and a rational interval $[\ell,r]$, write
\begin{equation}\label{eq:bernstein}
 p(\ell+(r-\ell)s)=\sum_{k=0}^{d}\beta_k\binom{d}{k}s^k(1-s)^{d-k}.
\end{equation}
Nonnegative coefficients $\beta_k$ certify nonnegativity on the entire interval. If $p_j$ are the ordinary power coefficients after the affine substitution, the exact conversion is
\begin{equation}\label{eq:bernstein-conversion}
 \beta_k=\sum_{j=0}^{k}p_j\frac{\binom{k}{j}}{\binom{d}{j}}.
\end{equation}
When the coefficient test does not succeed on an interval, halve it and repeat, using rational de Casteljau subdivision. The resulting successful partition is summarized in Table~\ref{tab:bernstein}.

\begin{table}[htbp]
\centering
\caption{The exact Bernstein certificate. Depth is relative to the indicated initial interval.}\label{tab:bernstein}
\small
\begin{tabular}{lrrr}
\toprule
Polynomial and initial interval & Degree & Accepted leaves & Maximum depth\\
\midrule
$P'$ on $[0,1]$ & 9 & 1 & 0\\
$g_0-\gamma$ on $[0,b]$ & 21 & 1 & 0\\
$g_1-\gamma$ on $[b,w]$ & 21 & 13 & 6\\
$g_2-\gamma$ on $[w,c_0]$ & 11 & 4 & 3\\
$g_3-\gamma$ on $[c_0,1]$ & 0 & 1 & 0\\
\bottomrule
\end{tabular}
\end{table}

All coefficients of all 20 accepted leaves are nonnegative. Interval coverage and the polynomial identities are checked exactly. The reconstructed exact-arithmetic checker supplied with this version, \texttt{reconstructed\_verify.py}, reconstructs the polynomials and the partition from \texttt{reconstructed\_certificate.json} using only rational arithmetic from the Python standard library. The archived Lean development independently checked identities, nonnegativity, and coverage; after incorporating the binomial factors and clearing positive denominators, its leaf certificates contained 367 nonnegative integer coefficients. Thus no floating-point tolerance or grid-sampling assumption occurs in the certificate.
\end{proof}

As an additional algebraic audit, the supplied original \path{independent_audit.py} reconstructs all actual overlap integrals from their pairwise support intersections and compares their rational polynomial coefficients with \eqref{eq:B}. It then uses exact Sturm root counts instead of Bernstein positivity: each $g_i-\gamma$ has positive endpoint values and no root in its interval. For monotonicity, it writes $P'(s)=(1-s)Q(s)$ and checks strict positivity of $Q$ on $[0,1]$. The recovered original program passed when rerun with SymPy 1.14.0. The additional reconstructed verifier independently repeats these numerical checks. These algebraic checks do not replace a Lean proof check.

The exact parameters can be checked without rerunning any numerical search. The certificate proves feasibility of this particular profile, not optimality among all profiles or all nonnegative functions.

%% file: sections/04_witness.tex
\section{Ordinary functions and the improved bound}\label{sec:ordinary}

For every $\delta>0$, extend the right-hand plateau by $\delta$ and define
\begin{equation}\label{eq:F}
 h_\delta(x)=\begin{cases}
 h(x),&0\le x\le1,\\
 1/a,&1<x\le1+\delta,\\
 0,&\text{otherwise},
 \end{cases}
 \qquad
 F_\delta(x)=\frac{a}{\delta}\ind_{[0,\delta]}(x)+h_\delta(x).
\end{equation}
The extension of the plateau is part of the construction and cannot simply be omitted.

\begin{lemma}[One-sided atom removal]\label{lem:removal}
For every $\delta>0$, the function $F_\delta$ is nonnegative, bounded, and supported in $[0,1+\delta]$. It belongs to $L^1(\R)\cap L^2(\R)$, every shifted product $F_\delta(x)F_\delta(x+t)$ is integrable, and
\begin{equation}\label{eq:witness-correlation}
 a_{F_\delta}(t)\ge\gamma\qquad(0\le t\le1).
\end{equation}
\end{lemma}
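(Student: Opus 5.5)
The plan is to expand the autocorrelation of $F_\delta$ bilinearly, discard the nonnegative terms that are not needed, and bound the two remaining terms from below by the two summands of $g$ in \eqref{eq:g}. Lemma~\ref{lem:certified} then gives $g\ge\gamma$. The qualitative claims come first and are routine. By Lemma~\ref{lem:certified}, $P'\ge0$ on $[0,1]$, and with $P(0)=0$, $P(1)=1$ this gives $0\le P\le1$. Hence $0\le\phi\le1$, so $0\le h_\delta\le 1/a$ and $0\le F_\delta\le a/\delta+1/a$. The support lies in $[0,1+\delta]$. A bounded measurable function with compact support lies in $L^1\cap L^2$, and each product $F_\delta(x)F_\delta(x+t)$ is bounded with compact support, hence integrable.

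For the inequality, write $A_\delta=\frac a\delta\ind_{[0,\delta]}$, so that $F_\delta=A_\delta+h_\delta$. For $0\le t\le1$,
\[
 a_{F_\delta}(t)=\int A_\delta(x)A_\delta(x+t)\,dx+\int A_\delta(x)h_\delta(x+t)\,dx+\int h_\delta(x)A_\delta(x+t)\,dx+\int h_\delta(x)h_\delta(x+t)\,dx .
\]
All four integrands are nonnegative, so we drop the first and third terms. Since $h_\delta\ge h$ pointwise, the last term is at least $\int h(x)h(x+t)\,dx=B(t)/a^2$.

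The cross term is the atom--density contribution, and it is where the one-sided construction matters. It equals $\frac a\delta\int_t^{t+\delta}h_\delta(y)\,dy$, which is $a$ times the average of $h_\delta$ over $[t,t+\delta]$. I claim $h_\delta$ is nondecreasing on $(-\infty,1+\delta]$. It is $0$ up to $b$, then $P((x-b)/w)/a$ on $[b,c_0]$, which is nondecreasing by $P'\ge0$ and reaches $P(1)/a=1/a$ at $c_0$. It then stays at $1/a$ on $(c_0,1+\delta]$, and the extended plateau is exactly what keeps it constant past $x=1$. For $0\le t\le1$ the whole window $[t,t+\delta]$ lies in $[0,1+\delta]$, so the average is at least $h_\delta(t)$. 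Here $h_\delta(t)=h(t)$, and we should check that this equals the value $\phi(t)/a$ used in the continuous extension of $g$ at the endpoints (at $t=0$ both are $0$; at $t=1$ both are $1/a$). So the cross term is at least $a\,h(t)=\phi(t)$.

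Adding the two bounds gives $a_{F_\delta}(t)\ge\phi(t)+B(t)/a^2=g(t)$ for every $t\in[0,1]$, including the endpoints, by the convention stated after \eqref{eq:g}. Lemma~\ref{lem:certified} then gives $g(t)\ge\gamma$. I expect the main obstacle to be the monotonicity step at large shifts. Without the extension $1<x\le1+\delta$, the window $[t,t+\delta]$ for $t$ near $1$ would run off the support, the average would fall below $h(t)$, and the atom term could be lost entirely at $t=1$. This is why the remark after \eqref{eq:F} says the extension cannot be omitted. The argument also needs care at the breakpoints $b$ and $c_0$, where it should use the pointwise definition of $\phi$ rather than the polynomial pieces.
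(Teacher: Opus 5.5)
Your proposal is correct and follows the paper's proof essentially step for step. You expand $a_{F_\delta}$ bilinearly and discard the rectangle's self-correlation and the other cross term. You bound the atom--density cross term below by $ah(t)=\phi(t)$, using that $h_\delta$ is nondecreasing on $[0,1+\delta]$, which is exactly what the plateau extension secures. You bound the density--density term below by $B(t)/a^2$ via $h_\delta\ge h\ge0$, and conclude with Lemma~\ref{lem:certified}. Your extra check of the endpoint values is harmless, and the breakpoint values match the paper's remark that the pieces $g_i$ agree there.
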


\begin{proof}
Monotonicity of $P$ gives $0\le h_\delta\le1/a$, so $0\le F_\delta\le a/\delta+1/a$. The support and integrability assertions follow. The function $h_\delta$ is nondecreasing on $[0,1+\delta]$. If $t\in[0,1]$ and $x\in[0,\delta]$, then
\[
 0\le t\le x+t\le1+\delta,\qquad h_\delta(x+t)\ge h_\delta(t)=h(t).
\]
Writing $r_\delta=(a/\delta)\ind_{[0,\delta]}$, one cross term in the autocorrelation therefore satisfies
\begin{equation}\label{eq:cross-term}
 \int_\R r_\delta(x)h_\delta(x+t)\,dx
 =\frac{a}{\delta}\int_0^\delta h_\delta(x+t)\,dx\ge ah(t).
\end{equation}
Also $h_\delta\ge h\ge0$ pointwise, whence
\[
 \int_\R h_\delta(x)h_\delta(x+t)\,dx
 \ge\int_\R h(x)h(x+t)\,dx.
\]
The other cross term and the rectangle's self-correlation are nonnegative. Adding these four terms and using Lemma~\ref{lem:certified} gives \eqref{eq:witness-correlation}. This argument works at $t=0$ and $t=1$ directly.
\end{proof}

The mass of the intermediate measure is the exact rational number
\begin{equation}\label{eq:mass}
 m=a+\frac{wI(1)+b}{a}=1.561018228470523\ldots.
\end{equation}
Integrating \eqref{eq:F} gives
\begin{equation}\label{eq:witness-ratio}
 \int_\R F_\delta=m+\frac{\delta}{a}>0,\qquad
 \frac{a_{F_\delta}(t)}{\left(\int F_\delta\right)^2}
 \ge\frac{\gamma}{(m+\delta/a)^2}\qquad(0\le t\le1).
\end{equation}
The exact arithmetic certificate verifies
\begin{equation}\label{eq:ratio-identities}
 \frac{\gamma}{m^2}=\frac{N}{D},\qquad
 \frac{\gamma}{(m+10^{-10}/a)^2}=\frac{N}{D_F},\qquad
 \frac{N}{D_F}>\frac{4103779}{10^7}.
\end{equation}

\begin{proof}[Proof of Theorem~\ref{thm:main}]
Set $\delta=10^{-10}$ in \eqref{eq:witness-ratio} and use \eqref{eq:ratio-identities} to obtain \eqref{eq:explicit-witness}. As $\delta\downarrow0$, the right side of \eqref{eq:witness-ratio} tends to $R=\gamma/m^2$. Hence for each real $q<R$, some positive $\delta$ gives \eqref{eq:limiting-witness}. Each such $F_\delta$ is an admissible ordinary function in \eqref{eq:definition}, and taking the infimum over $t$ preserves its lower bound. Thus $\mathcal A\ge q$ for every $q<R$, which implies $\mathcal A\ge R$.
\end{proof}

No equality between a supremum over all of $L^1$ and a supremum over a restricted function class is required here: it is enough that the constructed witnesses belong to the original class. In particular, the limiting argument does not insert the atomic measure $\mu$ into that class.

%% file: sections/05_applications.tex
\section{Applications of the same construction}\label{sec:applications}

\subsection{Generalized difference bases at large multiplicity}
For positive integers $g,n$, let $\eta_g(n)$ be the least size of a finite set $S\subset\Z$ for which every $1\le d\le n$ has at least $g$ ordered representations $d=s-s'$ with $s,s'\in S$. The support of $S$ is not restricted to $[0,n]$. Define
\begin{equation}\label{eq:tau}
 \tau=\inf\left\{\int_\R f:
 f\ge0,\ f\in L^1(\R),\ a_f(t)\ge1\text{ for all }0\le t\le1\right\}.
\end{equation}
Kravitz \cite[Theorem~1.2]{kravitz2021} established the discrete--continuous correspondence
\begin{equation}\label{eq:discrete-continuous}
 \lim_{g\to\infty}\liminf_{n\to\infty}\frac{\eta_g(n)}{\sqrt{gn}}
 =\tau=
 \lim_{g\to\infty}\limsup_{n\to\infty}\frac{\eta_g(n)}{\sqrt{gn}}.
\end{equation}
Symmetry of autocorrelation makes the requirement on $[0,1]$ equivalent to the one on $[-1,1]$ in that correspondence.

\begin{corollary}\label{cor:tau}
The coefficient in \eqref{eq:discrete-continuous} satisfies
\begin{equation}\label{eq:tau-bound}
 \tau\le\frac{m}{\sqrt\gamma}=\sqrt{\frac{D}{N}}
 =1.5610182362756142228486109\ldots<1.5610183.
\end{equation}
\end{corollary}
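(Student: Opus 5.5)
The plan is to prove Corollary~\ref{cor:tau} by rescaling the witnesses $F_\delta$ from Lemma~\ref{lem:removal} so that their autocorrelation is at least $1$ on $[0,1]$. Their masses will approach $m/\sqrt\gamma$, and taking the infimum in \eqref{eq:tau} then gives the bound.

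Fix $\delta>0$ and put $f_\delta=\gamma^{-1/2}F_\delta$. By Lemma~\ref{lem:removal}, $f_\delta$ is nonnegative, bounded, compactly supported and in $L^1(\R)$. Autocorrelation is quadratic in the function, so for every $0\le t\le1$,
\[
 a_{f_\delta}(t)=\gamma^{-1}a_{F_\delta}(t)\ge1 .
\]
Hence $f_\delta$ is feasible in \eqref{eq:tau}. By \eqref{eq:witness-ratio}, its mass is $\int_\R f_\delta=(m+\delta/a)/\sqrt\gamma$. Therefore $\tau\le(m+\delta/a)/\sqrt\gamma$ for every $\delta>0$. Letting $\delta\downarrow0$ gives $\tau\le m/\sqrt\gamma$.

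For the identity $m/\sqrt\gamma=\sqrt{D/N}$, I would use the first relation in \eqref{eq:ratio-identities}, $\gamma/m^2=N/D$. Since $m>0$ and $\gamma>0$, inverting and taking positive square roots gives $m/\sqrt\gamma=\sqrt{D/N}$.

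The numerical claim $\sqrt{D/N}<1.5610183$ is equivalent to $D<(1.5610183)^2N$. Both sides are rational, so clearing denominators turns this into a single exact integer comparison, which is checked directly. The displayed decimal expansion is only a rounded evaluation of $\sqrt{D/N}$ and is not needed for the inequality.

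I do not expect a genuine obstacle in this argument. The step that needs some care is the limiting one: the atomic measure $\mu$ is not itself feasible in \eqref{eq:tau}, so the bound has to be taken as an infimum over the ordinary functions $f_\delta$. Because of this, $m/\sqrt\gamma$ is not shown to be attained, only to be an upper bound for $\tau$. The link to $\eta_g(n)$ comes entirely from quoting \eqref{eq:discrete-continuous}. The remark about symmetry is not needed here, since \eqref{eq:tau} is already stated with shifts in $[0,1]$.
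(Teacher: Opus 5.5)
Your proposal is correct and follows essentially the same route as the paper. The paper likewise rescales $F_\delta$ to $F_\delta/\sqrt\gamma$, uses Lemma~\ref{lem:removal} for admissibility and \eqref{eq:witness-ratio} for the mass $(m+\delta/a)/\sqrt\gamma$, takes the infimum over $\delta>0$, and invokes \eqref{eq:ratio-identities} together with an exact squared rational comparison for the final inequality.
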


\begin{proof}
The ordinary function $F_\delta/\sqrt\gamma$ is admissible in \eqref{eq:tau} by Lemma~\ref{lem:removal}. Its mass is $(m+\delta/a)/\sqrt\gamma$. Take the infimum over positive $\delta$ and use \eqref{eq:ratio-identities}. The last strict inequality can be checked by squaring positive rational quantities.
\end{proof}

The order of limits in \eqref{eq:discrete-continuous} is essential. This corollary is not a statement for $g=1$, nor for sets constrained to $[0,n]$; with the latter restriction, the endpoint distance $n$ has at most one representation.

\subsection{A feasible point for the continuous sparse-ruler relaxation}
Let $M(L)$ be the least cardinality of a complete sparse ruler $S\subseteq\{0,\ldots,L\}$, meaning that $\{1,\ldots,L\}\subseteq S-S$. Define
\begin{equation}\label{eq:relaxation}
 B_* = \inf\left\{\nu([0,1])^2:
 \begin{array}{l}
 \nu\text{ is a finite positive measure},\quad\operatorname{supp}\nu\subseteq[0,1],\\
 \nu*\widetilde\nu\ge\ind_{[-1,1]}(t)\,dt
 \end{array}
 \right\}.
\end{equation}
Domination here is an inequality of measures, not an endpointwise inequality between densities.

\begin{proposition}\label{prop:ruler}
The relaxation satisfies
\begin{equation}\label{eq:relaxation-bound}
 \liminf_{L\to\infty}\frac{M(L)^2}{L}\ge B_*
 \le\frac{m^2}{\gamma}=\frac{D}{N}
 =2.4367779339850293520112621\ldots<2.436778.
\end{equation}
The two inequalities in \eqref{eq:relaxation-bound} do not compare $M(L)^2/L$ with $D/N$.
\end{proposition}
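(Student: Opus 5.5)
The proposition makes two separate claims. The right inequality $B_*\le D/N$ comes from using the rescaled atomic measure $\mu/\sqrt\gamma$ as a feasible point. The left inequality $\liminf M(L)^2/L\ge B_*$ comes from a weak-$*$ compactness argument applied to normalized counting measures of optimal rulers. Neither part needs the ordinary-function witnesses $F_\delta$. Here it is $\mu$ itself that belongs to the relevant class, since \eqref{eq:relaxation} allows measures.

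\emph{Upper bound on $B_*$.} Put $\nu=\mu/\sqrt\gamma$, which is supported in $[0,1]$ with $\nu([0,1])=m/\sqrt\gamma$.
\begin{itemize}
\item By the decomposition after \eqref{eq:g}, $\mu*\widetilde\mu=a^2\delta_0+\widetilde g(t)\,dt$. Here $\widetilde g$ is an even density that equals $g$ on $(0,1)$.
\item To see this, expand $\mu*\widetilde\mu$ into atom--atom, atom--density and density--density parts. The atom--density parts give $a h(t)+a h(-t)$. For $0<t<1$ only the first term is nonzero, since $h$ vanishes on negative arguments. The density--density part is $\int h(x)h(x+t)\,dx$.
\item Lemma~\ref{lem:certified} gives $g\ge\gamma$ on $[0,1]$, and evenness gives $\widetilde g\ge\gamma$ on $(-1,1)$. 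Points of Lebesgue measure zero do not matter, and the atom is nonnegative.
\item Hence $\nu*\widetilde\nu\ge\gamma^{-1}\gamma\,\ind_{[-1,1]}(t)\,dt$ as measures.
\item Therefore $B_*\le m^2/\gamma$, and this equals $D/N$ by \eqref{eq:ratio-identities}. The decimal bound $D/N<2.436778$ is checked by comparing integers after clearing denominators.
\end{itemize}

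\emph{Lower bound on the sparse-ruler liminf.}
\begin{itemize}
\item If the liminf is infinite there is nothing to prove. Otherwise choose $L_k\to\infty$ realizing it, and take optimal rulers $S_k$ with $|S_k|=M(L_k)$.
\item Define $\nu_k=L_k^{-1/2}\sum_{s\in S_k}\delta_{s/L_k}$. It is supported in $[0,1]$ and has mass $M(L_k)/\sqrt{L_k}$, which is bounded.
\item Then $\nu_k*\widetilde\nu_k=L_k^{-1}\sum_d r_k(d)\,\delta_{d/L_k}$, where $r_k(d)\ge1$ for every $|d|\le L_k$ by completeness of the ruler.
\item So $\nu_k*\widetilde\nu_k\ge\lambda_k:=L_k^{-1}\sum_{|d|\le L_k}\delta_{d/L_k}$, and $\lambda_k\to\ind_{[-1,1]}\,dt$ weakly, as Riemann sums.
\item By Banach--Alaoglu (Prokhorov on the compact set $[0,1]$), pass to a subsequence with $\nu_k\to\nu$ weakly-$*$. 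Then $\nu$ is supported in $[0,1]$ and $\nu([0,1])^2=\lim M(L_k)^2/L_k$, because the constant function $1$ is continuous on $[0,1]$.
\item Convolution is weakly continuous for measures on a compact set: test $\nu_k*\widetilde\nu_k$ against $\varphi(x-y)$ with $\varphi$ continuous. So $\nu_k*\widetilde\nu_k\to\nu*\widetilde\nu$.
\end{itemize}

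\emph{Main obstacle.} The delicate step is passing the measure inequality to the limit. For every continuous $\varphi\ge0$ with compact support, $(\nu_k*\widetilde\nu_k)(\varphi)\ge\lambda_k(\varphi)$. Letting $k\to\infty$ gives $(\nu*\widetilde\nu)(\varphi)\ge\int_{-1}^{1}\varphi$. By the Riesz representation theorem and outer regularity, this inequality for all nonnegative $\varphi\in C_c$ yields $\nu*\widetilde\nu\ge\ind_{[-1,1]}\,dt$ as measures on all Borel sets. This is exactly feasibility in \eqref{eq:relaxation}, so $B_*\le\liminf M(L)^2/L$.

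The two inequalities share the quantity $B_*$ but point in opposite directions. So, as the proposition itself notes, they do not combine into any comparison of $M(L)^2/L$ with $D/N$.
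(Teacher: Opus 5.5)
Your proposal is correct and follows the paper's proof essentially step for step: feasibility of $\mu/\sqrt\gamma$ via Lemma~\ref{lem:certified}, symmetry and the nonnegative atom give $B_*\le D/N$, and weak compactness of the normalized ruler measures, followed by passing the domination inequality to the limit, gives the $\liminf$ bound. The one step you assert without proof is the weak convergence of $\nu_k\otimes\nu_k$ on $[0,1]^2$. The paper justifies it by first checking product test functions $u(x)v(y)$ and then using uniform approximation with bounded masses.
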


\begin{proof}
Choose any sequence of complete rulers $S_L$ along which $|S_L|/\sqrt L$ tends to a finite value $s$, and set
\[
 \nu_L=\frac{1}{\sqrt L}\sum_{x\in S_L}\delta_{x/L}.
\]
The masses are bounded and the supports lie in $[0,1]$, so a subsequence converges weakly to a finite positive measure $\nu$ on that interval, with mass $s$. Completeness gives
\begin{equation}\label{eq:discrete-measures}
 \nu_L*\widetilde\nu_L\ge\frac{1}{L}\sum_{\substack{-L\le d\le L\\d\ne0}}\delta_{d/L}.
\end{equation}
The right side tends weakly to Lebesgue measure on $[-1,1]$. On the compact square $[0,1]^2$, weak convergence of the product measures follows first for functions of the form $u(x)v(y)$ and then for all continuous functions by uniform approximation and bounded masses. Applying this to continuous functions of $x-y$ gives weak convergence of the left side. Testing \eqref{eq:discrete-measures} against nonnegative continuous functions proves that $\nu$ is feasible in \eqref{eq:relaxation}. Therefore $s^2\ge B_*$. Taking a subsequence approaching the lower limit proves the first inequality; an infinite lower limit is harmless.

For the second inequality, Lemma~\ref{lem:certified}, symmetry, and the nonnegative atom imply $\mu*\widetilde\mu\ge\gamma\ind_{[-1,1]}\,dt$. Consequently $\nu=\mu/\sqrt\gamma$ is feasible, with squared mass $m^2/\gamma=D/N$.
\end{proof}

For context, the same classical Fourier argument underlying \eqref{eq:fourier-bound} gives $B_*\ge K$. Indeed, for a feasible $\nu$ let $\rho=\nu*\widetilde\nu-\ind_{[-1,1]}\,dt\ge0$. Using $\widehat\nu(\xi)=\int e^{-i\xi x}\,d\nu(x)$ and taking real parts,
\[
 0\le|\widehat\nu(\xi)|^2
 =2\frac{\sin\xi}{\xi}+\operatorname{Re}\widehat\rho(\xi)
 \le2\frac{\sin\xi}{\xi}+\nu(\R)^2-2.
\]
Optimize in $\xi$ to obtain the bound. Thus the present certificate places this particular relaxation in the narrow interval $[K,D/N]$.

The classical Wichmann construction gives an asymptotic upper coefficient $\sqrt3$ for complete sparse rulers; see Saarela and Vanhatalo \cite{saarela2026}. Our feasible measure shows that the conditions in \eqref{eq:relaxation} alone cannot prove $B_*\ge2.5$ or $B_*\ge3$. It does not give a new asymptotic bound on $M(L)$, and no identity $B_*=1/\mathcal A$ is asserted: the support and admissible-object restrictions differ.

%% file: sections/06_reproducibility.tex
\section{Formal verification and reproducibility}\label{sec:reproducibility}
\subsection{Statements and scope}
The supplied Lean development uses Lean 4.19.0 and mathlib at commit
\begin{center}\small\ttfamily c44e0c8ee63ca166450922a373c7409c5d26b00b\end{center}
(the recorded v4.19.0 revision). The locked \path{lake-manifest.json} is included.
All numerical parameters are exact rational numbers interpreted in $\R$.
The original proof sources, recovered from the verification archive, are
included without changes under \path{anc/lean/}.

Besides the finite certificate, the development connects the algebra to
Lebesgue integrals and proves the six statements in
Table~\ref{tab:formal-statements}. The theorem names are in the namespace
\texttt{Autocorrelation}; the original specification propositions are in
\path{Targets.lean}. The audit file contains explicit type ascriptions binding
all six proof terms to those propositions.
\begin{table}[H]
\centering
\caption{Formalized statements and their roles in this paper.}\label{tab:formal-statements}
\small
\begin{tabular}{@{}p{0.30\textwidth}p{0.66\textwidth}@{}}
\toprule
Lean theorem & Mathematical content\\
\midrule
\path{overlap_integral} & The actual correlation integral equals the piecewise expression of Section~\ref{sec:profile}.\\
\path{witness_admissibility} & Nonnegativity, integrability of $F_\delta$ and its square, and integrability of every shifted product.\\
\path{mass_integral} & The actual mass integral is $m+\delta/a$.\\
\path{uniform_correlation} & The lower bound $a_{F_\delta}(t)\ge\gamma$ for all $t\in[0,1]$ and $\delta>0$.\\
\path{explicit_main} & The ordinary-function statement \eqref{eq:explicit-witness}, including positivity of the mass.\\
\path{limiting_main} & For every $q<R$, an ordinary-function witness satisfying \eqref{eq:limiting-witness}, with its admissibility conditions.\\
\bottomrule
\end{tabular}
\end{table}

The proof of \path{limiting_main} chooses an actual $F_\delta$, not the atomic
measure. It does not assert attainment of $R$. The real-valued integrals in these
statements are accompanied by the necessary integrability proofs. No default
value of an integral on a nonintegrable function is used to obtain the bounds.

The formalization does not define the supremum $\A$ or formalize the final
supremum argument following \eqref{eq:limiting-witness}; that argument is given
in Section~\ref{sec:ordinary}. Nor does it formalize the applications in
Section~\ref{sec:applications}, the Fourier argument, or
Appendix~\ref{app:sampling}. These exclusions are part of the claimed scope.

\subsection{Recorded verification and reproduction}
The archived local Windows x64 run used Lean commit \texttt{6caaee842e94} and
Lake \texttt{5.0.0-6caaee8}. It rebuilt all 39 project modules after removal
of the project's build directory, retaining matching dependency caches.
The complete \texttt{lake build}, \texttt{lake env lean Audit.lean}, and
original Windows checking script returned exit code zero. This is a
project-source rebuild, not a rebuild of the Lean compiler and all dependencies
from source.

The recorded audit requests 25 declarations. Their reported dependencies are
\texttt{propext}, \path{Classical.choice}, and \path{Quot.sound}. No
\texttt{sorryAx}, problem-specific axiom, or external-computation axiom appears
in those dependencies. These are proofs relative to Lean's standard mathematical
foundations, not claims of axiom-free verification. The Python programs are not
trusted oracles for the Lean proofs.

The recovered proof sources and configuration were compared with the hashes in
the archived clean-build record. All 45 recorded source hashes match, including
the 41 Lean source files. The original build and axiom outputs are included
under \path{anc/lean/logs/}. An excerpt of the historical metadata is supplied
as \path{anc/lean/HISTORICAL_VERIFICATION.json}; it is explicitly labeled as a
historical record, not a new Lean execution.

The original rational and independent algebraic checks can be run from the
submission root:
\begin{lstlisting}
python anc/lean/original/verify.py anc/lean/certificate.json
python anc/lean/original/independent_audit.py anc/lean/certificate.json
python anc/reconstructed_verify_applications.py
\end{lstlisting}
The first and third use only the Python standard library. The second uses
SymPy; the recorded version is 1.14.0. The original input certificate has SHA-256
\begin{center}\footnotesize\ttfamily
e9b58a1068baa81723e55ad01640260118ce156ac23c07332cefefc46f552d79
\end{center}
The original numerical programs were rerun successfully during the present
manuscript preparation. An additional independently reconstructed checker and
its exact output are supplied under \path{anc/}, together with the new
appendix checker. Their filenames distinguish them from the original programs.

With the pinned Lean toolchain and dependencies installed, proof checking from
\path{anc/lean/} uses
\begin{lstlisting}
lake build
lake env lean Audit.lean
\end{lstlisting}
The ancillary README gives reproduction instructions and distinguishes the
archived verification from the present packaging checks. No dependency binaries
are bundled. The only recorded mathlib patch concerns the cache unpacker's
worker count; it changes no mathematical declarations.

Rechecking the archived proof sources does not require regeneration. The
original generator was byte-identical on the recorded Windows run; it uses
native line endings, so its original raw-hash regeneration check is not a
cross-platform reproducibility claim. The verified source bytes are preserved
in this submission. Lean was not rerun in the present manuscript-preparation
environment; the Lean execution claim refers to the archived local run.

\paragraph{Use of generative AI.}
ChatGPT and Codex were used to assist with construction and proof exploration,
verification code, Lean proof development, and manuscript drafting. Codex was
also used to reconstruct the current manuscript source from the supplied PDF,
then reconcile it with the recovered original source archive. The mathematical
arguments, exact certificates, and stated formal-verification scope are provided
for independent examination; generative output is not itself evidence of
correctness. Responsibility for the submitted manuscript rests with its human
authors.

%% file: sections/appendix.tex
\section{A limitation of independent sampling and completion}\label{app:sampling}
This appendix gives a separate application of the same mass parameters. It
explains why one particular discretization procedure need not preserve a strong
continuous bound. All results here are mathematical proofs, not Lean-verified
statements. They impose independence only on the initial sampling, not on the
subsequent completion algorithm.

\subsection{A finite missing-distance estimate}
Fix $C\subseteq\{0,\ldots,L\}$ and form a random set $S$ by retaining all of $C$
and selecting every other location independently, with probabilities $p_i$. Set
$p_i=1$ on $C$, and write
\[
 u=|C|,\qquad \kappa=\mathbb E|S|,\qquad
 p_* = \max_{i\notin C}p_i<1,\qquad
 T=(C-C)\cap\{1,\ldots,L\}.
\]
Let $n_0=L-|T|>0$ and let $H(S)$ count the distances in $\{1,\ldots,L\}$ missing
from $S-S$.

\begin{proposition}\label{prop:missing-distance}
In this finite model,
\begin{equation}\label{eq:missing-estimate}
 \mathbb EH(S)\ge n_0\exp\!\left(-\frac{\kappa^2-u^2}{2n_0(1-p_*)}\right),
 \qquad
 \operatorname{Var}H(S)\le\kappa(\kappa^2+\kappa).
\end{equation}
\end{proposition}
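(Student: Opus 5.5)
We treat the two estimates in \eqref{eq:missing-estimate} separately. The first uses a correlation inequality and then Jensen's inequality. The second uses the Efron--Stein inequality.

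\textbf{Expectation.} Write $H(S)=\sum_{d}X_d$, where $d$ runs over $\{1,\ldots,L\}\setminus T$ (exactly $n_0$ values) and $X_d$ indicates that $d\notin S-S$. Distances in $T$ are always represented, since $C\subseteq S$.

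For a fixed $d\notin T$, the event $\{X_d=1\}$ is the intersection, over pairs $(i,i+d)$ in $\{0,\ldots,L\}$, of the events $E_{i}=\{\text{not both } i,i+d\in S\}$. Each $E_i$ is a decreasing event in the independent coordinates $(\ind_{j\in S})_j$. Hence by the Harris--FKG inequality
\[
 \Pr(X_d=1)\ge\prod_i\bigl(1-p_ip_{i+d}\bigr).
\]
Since $d\notin T$, every factor has at least one endpoint outside $C$, so $x=p_ip_{i+d}\le p_*<1$. The elementary bound $\log(1-x)\ge -x/(1-x)\ge -x/(1-p_*)$ then gives
\[
 \Pr(X_d=1)\ge\exp\bigl(-\Sigma_d/(1-p_*)\bigr),\qquad \Sigma_d=\sum_i p_ip_{i+d}.
\]

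Summing over $d$ and applying convexity of $\exp$ (Jensen over the $n_0$ values of $d$) yields
\[
 \mathbb E H\ge n_0\exp\Bigl(-\tfrac{1}{n_0(1-p_*)}\textstyle\sum_{d\notin T}\Sigma_d\Bigr).
\]
It remains to bound $\sum_{d\notin T}\Sigma_d$. This sum is at most the sum of $p_ip_j$ over unordered pairs $i<j$ that are not both in $C$, which equals
\[
 \tfrac12\Bigl(\kappa^2-\textstyle\sum_j p_j^2\Bigr)-\tfrac12\bigl(u^2-u\bigr)=\tfrac12\Bigl(\kappa^2-u^2-\textstyle\sum_{j\notin C}p_j^2\Bigr)\le\tfrac12\bigl(\kappa^2-u^2\bigr).
\]
Substituting this gives the first inequality.

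\textbf{Variance.} Since $H=n_0-Z$, where $Z$ counts the distances outside $T$ that are represented, we have $\operatorname{Var}H=\operatorname{Var}Z$. We apply Efron--Stein to the independent coordinates $\xi_i=\ind_{i\in S}$ for $i\notin C$:
\[
 \operatorname{Var}Z\le\tfrac12\sum_{i\notin C}\mathbb E\bigl(Z-Z^{(i)}\bigr)^2,
\]
where $Z^{(i)}$ is computed after resampling $\xi_i$ independently.

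The difference $Z-Z^{(i)}$ is nonzero only when $\xi_i\ne\xi_i'$, which has probability $2p_i(1-p_i)\le 2p_i$. Toggling $i$ changes $Z$ by at most the number of distances $|i-j|$ with $j\in S\setminus\{i\}$, so $|Z-Z^{(i)}|\le |S\setminus\{i\}|$. This quantity is independent of $(\xi_i,\xi_i')$. Moreover $|S\setminus\{i\}|$ is a sum of independent indicators with mean at most $\kappa$, so its second moment is at most $\operatorname{Var}+\text{mean}^2\le\kappa+\kappa^2$.

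Hence $\mathbb E(Z-Z^{(i)})^2\le 2p_i(\kappa+\kappa^2)$. Summing over $i\notin C$ and using $\sum_{i\notin C}p_i\le\kappa$ gives $\operatorname{Var}H\le\kappa(\kappa^2+\kappa)$.

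The main obstacle is the dependence among the pair events in the expectation bound: pairs $(i,i+d)$ and $(i+d,i+2d)$ share a point, so the product formula is not exact. The Harris inequality handles this in the right direction because every event involved is decreasing. The remaining points to check are that $d\notin T$ really forces $p_ip_{i+d}\le p_*$ for every pair, and the bookkeeping of the pair sum in terms of $\kappa$ and $u$.
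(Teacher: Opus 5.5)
Your proposal is correct and follows essentially the same route as the paper. Both arguments use Harris positive association for the decreasing non-selection events, the bound $-\log(1-x)\le x/(1-p_*)$, Jensen over the $n_0$ unrepresented-by-$C$ distances with the same pair count $\tfrac12(\kappa^2-u^2)$, and Efron--Stein with the change from toggling bit $i$ bounded by $M_{-i}=|S\setminus\{i\}|$, whose second moment is at most $\kappa+\kappa^2$.
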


\begin{proof}
For $d\notin T$, put $\lambda_d=\sum_{i=0}^{L-d}p_ip_{i+d}$. The events that a
pair $(i,i+d)$ is not selected are decreasing events in the independent
underlying bits. Positive association therefore gives
\begin{equation}\label{eq:missing-probability}
 \mathbb P(d\notin S-S)
 \ge\prod_{i=0}^{L-d}(1-p_ip_{i+d})
 \ge\exp\!\left(-\frac{\lambda_d}{1-p_*}\right).
\end{equation}
For completeness, association for two functions monotone in the same direction
follows by induction on the number of bits: conditional covariance is
nonnegative by induction, and covariance of the two conditional means for the
last bit is $p(1-p)\Delta F\Delta G\ge0$. Products of decreasing event
indicators remain decreasing, so the assertion extends to the intersection in
\eqref{eq:missing-probability}. The second inequality uses $p_ip_{i+d}\le p_*$
for $d\notin T$ and $-\log(1-x)\le x/(1-p_*)$ on $[0,p_*]$.

Every deterministic pair has distance in $T$. Consequently
\[
 \sum_{d\notin T}\lambda_d
 \le\frac{\kappa^2-\sum_i p_i^2}{2}-\binom{u}{2}
 \le\frac{\kappa^2-u^2}{2}.
\]
Sum \eqref{eq:missing-probability} over the $n_0$ distances and use convexity of
the exponential to obtain the expectation bound.

For the variance, let $H^{(i)}$ be the value after independently resampling bit
$i$, and let $M_{-i}$ count the other selected locations. Changing bit $i$
changes at most $M_{-i}$ distances. Variance tensorization for independent
coordinates gives
\[
 \operatorname{Var}H
 \le\frac12\sum_i\mathbb E(H-H^{(i)})^2
 \le\sum_{i\notin C}p_i(1-p_i)\mathbb EM_{-i}^2
 \le\kappa(\kappa^2+\kappa).
\]
Here $\mathbb EM_{-i}\le\kappa$ and $\operatorname{Var}M_{-i}\le\kappa$. The
tensorization inequality follows by induction from the law of total variance
and Jensen's inequality, or equivalently by its conditional-variance form;
independent resampling accounts for the factor $1/2$.
\end{proof}

For a fixed set $S$ of size $k$, any complete set $B\supseteq S$ must satisfy
\begin{equation}\label{eq:completion-pairs}
 |B|(|B|-1)\ge k(k-1)+2H(S).
\end{equation}
Indeed, adjoining $r=|B\setminus S|$ points introduces at most $kr+\binom r2$
new unordered pairs, each representing at most one formerly missing positive
distance. No restriction on the algorithm choosing those new points is used.

\begin{theorem}[Independent-sampling completion bound]\label{thm:sampling-completion}
Suppose a sequence of the preceding models satisfies
\[
 \frac{\mathbb E|S_L|}{\sqrt L}\longrightarrow\bar m,\qquad
 \frac{|C_L|}{\sqrt L}\longrightarrow\bar a,\qquad
 |(C_L-C_L)\cap\{1,\ldots,L\}|=o(L),\qquad p_*=o(1).
\]
Put $\beta=\exp[-(\bar m^2-\bar a^2)/2]$. For each fixed $\varepsilon>0$, with
probability tending to one,
\begin{equation}\label{eq:completion-bound}
 \frac{H(S_L)}{L}\ge\beta-\varepsilon,\qquad
 \frac{|B|}{\sqrt L}\ge\sqrt{\bar m^2+2\beta}-\varepsilon
 \quad\text{for every complete }B\supseteq S_L.
\end{equation}
\end{theorem}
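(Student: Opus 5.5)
The plan is to combine Proposition~\ref{prop:missing-distance}, Chebyshev's inequality, and the completion inequality \eqref{eq:completion-pairs}. Write $n_0=n_0(L)=L-|T_L|$, so $n_0/L\to1$ by the hypothesis $|(C_L-C_L)\cap\{1,\ldots,L\}|=o(L)$. Put $\kappa_L=\mathbb E|S_L|$ and $u_L=|C_L|$. Then $(\kappa_L^2-u_L^2)/L\to\bar m^2-\bar a^2$, and $1-p_*\to1$. The exponent in \eqref{eq:missing-estimate} is
\[
\frac{\kappa_L^2-u_L^2}{2n_0(1-p_*)}=\frac{(\kappa_L^2-u_L^2)/L}{2(n_0/L)(1-p_*)}\longrightarrow\frac{\bar m^2-\bar a^2}{2}.
\]
Hence $\liminf_L \mathbb EH(S_L)/L\ge\beta$. (If $n_0=0$ for some $L$, the statement is vacuous only for finitely many $L$, because $n_0/L\to1$.)

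For concentration, the variance bound gives $\operatorname{Var}H(S_L)\le\kappa_L^3+\kappa_L^2=O(L^{3/2})$. Dividing by $L^2$ yields $\operatorname{Var}(H(S_L)/L)=O(L^{-1/2})\to0$. By Chebyshev, $H(S_L)/L\ge\mathbb EH(S_L)/L-\varepsilon/2$ with probability tending to one. For large $L$ this gives $H(S_L)/L\ge\beta-\varepsilon$, which is the first part of \eqref{eq:completion-bound}.

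For the size of completions, I also need $|S_L|$ to concentrate. $|S_L|$ is $u_L$ plus a sum of independent Bernoulli variables, so its variance is at most $\kappa_L$. Chebyshev then gives $|S_L|/\sqrt L\to\bar m$ in probability. On the intersection of these two high-probability events, apply \eqref{eq:completion-pairs} with $k=|S_L|$ to every complete $B\supseteq S_L$:
\[
\frac{|B|^2}{L}\ge\frac{|B|(|B|-1)}{L}\ge\frac{k^2}{L}-\frac{k}{L}+\frac{2H(S_L)}{L}\ge \bar m^2+2\beta-\varepsilon',
\]
where $\varepsilon'\to0$ as the tolerances shrink, and $k/L\to0$. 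Taking square roots, which is uniformly continuous on $[0,\infty)$, gives $|B|/\sqrt L\ge\sqrt{\bar m^2+2\beta}-\varepsilon$ once the intermediate tolerances are chosen small in terms of $\varepsilon$. The bound holds simultaneously for all $B$ because it depends only on $S_L$.

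The main obstacle is the variance estimate. It must be $o(L^2)$ so that Chebyshev applies, and the bound $\kappa(\kappa^2+\kappa)\asymp L^{3/2}$ from Proposition~\ref{prop:missing-distance} is exactly what makes this work. The remaining steps are bookkeeping of the $\varepsilon$'s, in particular turning the $\liminf$ of the expectation into an eventual lower bound $\beta-\varepsilon/2$.
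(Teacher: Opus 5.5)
Your proposal is correct and follows essentially the same route as the paper: the expectation bound and the $O(L^{3/2})$ variance bound of Proposition~\ref{prop:missing-distance}, together with Chebyshev, give concentration of $H(S_L)/L$; the Bernoulli variance bound gives $|S_L|/\sqrt L\to\bar m$ in probability; and \eqref{eq:completion-pairs} is then applied on the intersection of these events. The only difference is cosmetic: you use $|B|^2\ge|B|(|B|-1)$ where the paper solves the quadratic explicitly, and this does not change the limit.
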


\begin{proof}
Proposition~\ref{prop:missing-distance} gives
$\liminf\mathbb EH(S_L)/L\ge\beta$ and
$\operatorname{Var}H(S_L)=O(L^{3/2})$. Hence $(H-\mathbb EH)/L\to0$ in
probability. Also $\operatorname{Var}|S_L|\le\mathbb E|S_L|=O(\sqrt L)$, so
$|S_L|/\sqrt L\to\bar m$ in probability. Apply \eqref{eq:completion-pairs} on
the intersection of these high-probability events. For example, its positive
quadratic root gives
\[
 |B|\ge\frac{1+\sqrt{1+4k(k-1)+8H}}2
\]
simultaneously for all completions, and division by $\sqrt L$ gives the second
assertion.
\end{proof}

\subsection{The parameters of the polynomial construction}
For the normalized measure $\mu/\sqrt\gamma$, let
\[
 \bar a=\frac a{\sqrt\gamma},\qquad \bar m=\frac m{\sqrt\gamma}.
\]
Choose $C_L=\{0,\ldots,\lfloor\bar a\sqrt L\rfloor-1\}$ and, outside $C_L$,
select location $i$ independently with probability
\[
 p_i=\frac{h(i/L)}{\sqrt\gamma\sqrt L}.
\]
For sufficiently large $L$ these are valid probabilities, the short initial
cluster is disjoint from the support of $h(i/L)$, and Riemann sums give the mass
limits of Theorem~\ref{thm:sampling-completion}. Moreover $p_*=O(L^{-1/2})$ and
the cluster represents only $O(\sqrt L)$ distinct positive distances. The theorem
therefore applies with
\begin{equation}\label{eq:sampling-parameters}
 \begin{aligned}
 \beta_0&=\exp\!\left(-\frac{m^2-a^2}{2\gamma}\right)
       =0.3598474622246835\ldots,\\
 R_0&=\sqrt{\frac{m^2}{\gamma}+2\beta_0}
       \approx1.7766465203957698>1.7766>\sqrt3.
 \end{aligned}
\end{equation}
In particular, with probability tending to one every completion requires at
least $(1.7766-o(1))\sqrt L$ points, understood in the fixed-tolerance sense of
\eqref{eq:completion-bound}. The conclusion concerns this initial sampling and
retention of its points, not all random constructions or all complete rulers.

The same framework has a coarse obstruction independent of the particular
coefficients. If $\bar m$ is the mass of any finite positive measure $\nu$
satisfying $\nu*\widetilde\nu\ge\ind_{[-1,1]}\,dt$, the Fourier estimate at
$\xi=3\pi/2$ gives
\[
 \bar m^2\ge2+\frac4{3\pi}>\frac{80}{33}.
\]
Since $z+2e^{-z/2}$ is strictly increasing for $z>0$,
\begin{equation}\label{eq:coarse-obstruction}
 \bar m^2+2e^{-(\bar m^2-\bar a^2)/2}
 \ge\bar m^2+2e^{-\bar m^2/2}
 >\frac{80}{33}+2e^{-40/33}>3.
\end{equation}
Thus improving only the continuous density cannot overcome the cost within
this specific independent-sampling-and-retention framework. Correlated initial
constructions, conditioning on rare samples, or replacing a positive proportion
of the sampled points are not ruled out.

The strict numerical inequalities in \eqref{eq:sampling-parameters} and
\eqref{eq:coarse-obstruction} can be checked with rational arithmetic. For
rational $x\ge0$, the elementary estimate
\[
 e^x\le\sum_{j=0}^{J}\frac{x^j}{j!}
     +\frac{x^{J+1}/(J+1)!}{1-x/(J+2)}
 \qquad\left(\frac{x}{J+2}<1\right)
\]
provides, on taking reciprocals, a rational lower bound for $e^{-x}$. Taking
$J=30$ and comparing squares avoids using rounded decimal square roots as
proofs. The supplied \path{anc/reconstructed_verify_applications.py} performs
these exact checks using the displayed $J=30$ bound. Its certified decimal
enclosures are recorded in \path{anc/applications_checks.json}; approximate
decimal displays are not used as acceptance criteria.